\newtheorem{theorem}{Theorem}[section]
\newtheorem{lemma}[theorem]{Lemma}
\newtheorem{proposition}[theorem]{Proposition}
\theoremstyle{definition}
\def\R{\mathbb R}
\def\N{\mathbb N}
\def\Q{\mathbb Q}
\def\1{\mathbb 1}
\def\cS{\mathscr{S}}
\theoremstyle{remark}
\newtheorem{remark}[theorem]{Remark}
\numberwithin{equation}{section}
\begin{document}

\title{ANISOTROPIC SHUBIN OPERATORS AND EIGENFUNCTION EXPANSIONS IN GELFAND-SHILOV SPACES}

\author{Marco Cappiello}
\address{Dipartimento di Matematica, Universit\`a di Torino, Via Carlo Alberto 10, 10123 Torino, Italy}
\email{marco.cappiello@unito.it}

\author{Todor Gramchev}
\address{Dipartimento di Matematica e Informatica, Universit\`a di Cagliari, Via Ospedale 72, 09124 Cagliari, Italy}

\author{Stevan Pilipovic}
\address{Institute of Mathematics, University of Novi Sad, trg. D. Obradovica 4, 21000 Novi Sad, Serbia}
\email{stevan.pilipovic@uns.dmi.ac.rs}
\author{Luigi Rodino}
\address{Dipartimento di Matematica, Universit\`a di Torino, Via Carlo Alberto 10, 10123 Torino, Italy}
\email{luigi.rodino@unito.it}

\subjclass[2010]{Primary 46F05; Secondary 34L10, 47F05}



\keywords{anisotropic Shubin-type operators, Gelfand-Shilov spaces, eigenfunction expansions}

\begin{abstract} We derive new results on the characterization of Gelfand--Shilov spaces $\mathcal{S}^\mu_\nu (\R^n)$, $\mu,\nu >0$,
$\mu+\nu \geq 1$ by Gevrey estimates of the $L^2$ norms of iterates of $(m,k)$ anisotropic globally elliptic Shubin (or $\Gamma$) type operators, $(-\Delta)^{m/2} +| x |^k$ with $m,k\in 2\N$ 
 being a model operator,   and on the decay of the Fourier coefficients in the related eigenfunction expansions. Similar results are obtained for the spaces $\Sigma^\mu_\nu (\R^n)$, $\mu,\nu >0$, $\mu+\nu > 1$, cf. \eqref{GSdef}. In contrast to the symmetric case $\mu = \nu$ and $k=m$ (classical Shubin operators) we encounter resonance type phenomena involving the ratio $\kappa:=\mu/\nu$; namely we obtain a characterization of $\mathcal{S}^\mu_\nu(\R^n)$ and $\Sigma^\mu_\nu(\R^n)$ in the case $\mu=kt/(k+m), \nu= mt/(k+m), t \geq 1$, that is, when $\kappa=k/m \in \Q$. 
\end{abstract}

\maketitle

\section{Introduction and statement of the results}

The main goal of the paper is to prove results on the characterization of the  non-symmetric ($\mu \neq \nu)$ Gelfand--Shilov spaces $\mathcal{S}^\mu_\nu (\R^n)$, $\mu,\nu >0$,
$\mu+\nu \geq 1$ by Gevrey estimates of the $L^2$ norms of the iterates  $P^\ell u $, $\ell =1,2,\ldots, u\in \cS(\R^n),$ of positive anisotropic globally elliptic Shubin differential operators $P$ of the type $(m,k)$, $m,k$ being  even natural numbers, and on the decay of the Fourier coefficients $u_j$, $j\in \N$, in the eigenfunction expansions $u = \sum_{j=1}^\infty u_j \varphi_j$, where $\{ \varphi_j\}_{j=1}^\infty$ stands for an orthonormal basis of eigenfunctions associated to the  operator $P$. 
The $(m,k)$ Shubin elliptic differential operators are modelled by  
\begin{eqnarray}
{\mathcal H}^{m,k}_n:= (-\Delta)^{m/2} + |x|^{k}, \ \ \ |x| = \sqrt{ x_1^2 +\ldots +x_n^2}, \,  k, m\in 2\N.
\label{anismod1}
\end{eqnarray} 
We recall that for $\mu >0, \nu>0,$ the inductive  (respectively, projective) Gelfand-Shilov
classes $\mathcal{S}^\mu_\nu({\mathbb R}^n),  \; \mu+\nu\geq 1$ (respectively, $\Sigma^\mu_\nu (\R^n), \; \mu +\nu >1$),   
are defined as the set of all $u\in \cS(\R^n)$ for which there exist $A>0, C>0$ (respectively, for every $A>0$ there exists $C>0$) such that  
\begin{equation} \label{GSdef}
|x^\beta\partial_x^\alpha u(x)|\leq
CA^{|\alpha|+|\beta|}(\alpha!)^\mu(\beta!)^\nu, \;\; \alpha, \beta \in {\mathbb N}^n,
\end{equation}
 see  \cite{GeShi, A, CCK, CPRT, Mi}  and \cite[Chapter 6]{NR}.
These spaces have recently gained a wide importance in view of the fact that they represent a suitable functional setting both for microlocal analysis and PDE and for Fourier and time-frequency analysis \cite{AC, BG, CGR1, CGR2, CGR3, CNR, CaRo, CT, GZ2, T2}.

Concerning the investigation in the present paper, we can cite different sources of motivations. First, we recall the fundamental work of Seeley \cite{S2}  on eigenfunction expansions of real analytic functions on compact manifolds (see also the recent paper of 
Dasgupta and Ruzhansky \cite{DaRu1}, extending the result of \cite{S2} for all Gevrey spaces $G^\sigma$, $\sigma >1$, on compact Lie groups). 
Secondly, we mention the work \cite{gpr1} on the characterization of symmetric Gelfand-Shilov spaces 
$\mathcal{S}^\mu_\mu (\R^n)$ by means of estimates of iterates and the decay of the Fourier coefficients in the eigenfunction expansions associated to globally elliptic (or $\Gamma$ elliptic) differential operator. We also refer to \cite{vv}, where general Gevrey sequences $M_p$ are used. 
Finally, we mention as additional motivation the results on hypoellipticity in $\mathcal{S}^\mu_\nu (\R^n)$ for elliptic operators of the type $\mathcal{H}^{m,k}_n$ for $\mu \geq k/(m+k)$, $\nu \ge m/(m+k)$, $k,m$ being even natural numbers, cf. \cite{CGR2} (see also the older work \cite{CGR1}).

Before stating our main results we need some preliminaries.

As counterpart of an elliptic operator in a compact manifold, we consider in $\R^n$ the decay of the Fourier coefficients in the eigenfunction expansions associated to $\mathcal{H}^{m,k}_n $. In contrast to the symmetric case $\mu = \nu$ and $k=m$ (classical Shubin operators) we encounter new resonance type phenomena involving $\kappa:=\mu/\nu$, namely we can characterize the spaces $\mathcal{S}^\mu_\nu(\R^n)$, $\mu +\nu \geq1$ (respectively $\Sigma^\mu_\nu(\R^n)$, $\mu +\nu >1$) by iterates and eigenfunction expansions defined by $\mathcal{H}^{m,k}_n$  iff $\kappa$ is rational number, $\kappa=k/m$.

Our basic example of operator will be the anisotropic quantum harmonic
oscillator appearing in Quantum Mechanics 
\begin{eqnarray}\label{eq1.1}
{\mathcal H}^{2,k}_n=-\triangle+| x|^k, \qquad k \in 2\N,
\end{eqnarray}
with recovering for $k=2$ the standard harmonic oscillator 
whose eigenfunctions are the Hermite functions
\begin{eqnarray}\label{eq1.2}
h_{\alpha}(x)=H_\alpha(x)e^{-|x|^2/2}, \;\;\;
\alpha=(\alpha_1,...,\alpha_n)\in {\mathbb N}^n,
\end{eqnarray}
where $H_\alpha(x)$ is the $\alpha$-th Hermite polynomial. See for example \cite{lang, pil88, reedsimon}  for related Hermite
expansions as well as  \cite{gpr2, Wpar} for connections
with a degenerate harmonic oscillator.

Here we shall consider a more general class of operators with
polynomial coefficients in ${\mathbb R}^n$, namely $(m,k)$ anisotropic operators:
\begin{eqnarray}\label{eq1.3}
P=\sum\limits_{\frac{|\alpha|}{m} +\frac{|\beta|}{k} \leq 1}c_{\alpha\beta}x^\beta
D_x^\alpha, \;\;\; D^\alpha=(-i)^{|\alpha|}\partial_x^\alpha.
\end{eqnarray}
Set 
\begin{eqnarray}\label{w1}
\Lambda_{m,k}(x,\xi)  & = & (1 + | x|^{2k} + | \xi| ^{2m})^{1/2}, \quad (x,\xi)\in \R^{2n}, \ \textrm{$m,k\in 2\N$.}
\end{eqnarray} 

The global ellipticity for $P$ in (\ref{eq1.3}) is defined by
imposing
\begin{eqnarray}\label{eq1.4}
\sum\limits_{\frac{|\alpha|}{m}+\frac{|\beta|}{k}=1}c_{\alpha\beta}x^\beta\xi^\alpha
\neq 0 \;\;\; \mbox{for} \;\; (x,\xi)\neq (0,0).
\end{eqnarray}
or equivalently, there exist $C_1>0, C_2>0, R>0$ such that 
\begin{eqnarray}\label{eq1.4a}
C_2 \leq \frac{|p(x,\xi)|}{\Lambda_{m,k}(x,\xi)}\leq C_1, \quad |(x,\xi)|\geq R.
\end{eqnarray}
Under the assumption \eqref{eq1.4} (or \eqref{eq1.4a}), the following estimate holds for every $u \in \cS(\R^n)$:
\begin{equation} \label{ellipticestimate}
\sum_{\frac{|\alpha|}{m} +\frac{|\beta|}{k} \leq 1} \| x^\beta
D_x^\alpha u \|_{L^2} \leq C( \|Pu\|_{L^2} + \|u \|_{L^2}),
\end{equation}
cf. \cite{BBR}.

 For
these operators, the counterpart of the standard Sobolev spaces
are the spaces $
Q^{s}_{m,k}({\mathbb R}^n), s \in \R,$ defined, for example,  by requiring that
\begin{eqnarray}\label{eq1.5a}
\left\|\Lambda(x,D)^s u\right\|_{L^2}<\infty,
\end{eqnarray}
where
\begin{eqnarray}\label{sublin1}
\Lambda(x,\xi) = (1 + | x|^{2k}+ | \xi|^{2m})^{1/2\max \{k, m \}}, \quad k,m\in 2\N.
\end{eqnarray}

 Under the
global ellipticity assumption (\ref{eq1.4}), $$P:Q^{s}_{m,k}({\mathbb R}^n)
\to L^2({\mathbb R}^n),\; s = \max\{k,m\},$$ is a Fredholm operator. The
finite-dimensional null-space $\textrm{Ker}\, P$ is given by functions in the
Schwartz space $\cS({\mathbb R}^n)$.

We assume, as in \cite{gpr1},  that $P$ is a positive anisotropic elliptic operator, which implies that $k$ and $m$ are even numbers.
This guarantees the existence
of an orthonormal  basis of eigenfunctions $\varphi_j$, $j\in \N$, with eigenvalues $\lambda_j$, $\lim\limits_{j\to \infty }\lambda_j = +\infty$ (see \cite{shubin}). Moreover we have that
\begin{equation} \label{aseigen}
\lambda_j \sim C j^{\frac{mk}{n(m+k)}} \qquad \textit{as} \quad j \to +\infty.
\end{equation}
for some $C>0$, cf. \cite{BBR, shubin}.
Hence, given $u \in L^2({\mathbb R}^n)$, or $u \in
\cS '({\mathbb R}^n)$, we can expand
\begin{eqnarray}\label{eq1.7}
u=\sum\limits_{j=1}^\infty u_j \varphi_j
\end{eqnarray}
where the Fourier coefficients $u_j \in \mathbb C$ are defined by
\begin{eqnarray}\label{eq1.8}
u_j=(u,u_j)_{L^2}, \;\; j=1,2,\ldots
\end{eqnarray}
with convergence in $L^2({\mathbb R}^n)$ or $\cS '({\mathbb R}^n)$ for (\ref{eq1.7}).

By the hypoellipticity results of \cite{CGR2} the eigenfunctions $\varphi_j$ belong to $\mathcal{S}^{k/(m+k)}_{m/(m+k)}(\R^n)$.

We first state an assertion on the characterization of the anisotropic Sobolev spaces $Q^s_{m,k} (\R^n)$ and the Schwartz class $\cS(\R^n)$.

\begin{theorem}\label{t1.1}
Suppose that $P$ is $(m,k)$-globally elliptic  cf. \eqref{eq1.3}, \eqref{eq1.4}, and positive. Then:
\begin{itemize}
\item [(i)] $u \in Q^s_{m,k}({\mathbb R}^n) \Longleftrightarrow \sum\limits_{j=1}^\infty 
|u_j|^2 \lambda_j^{s/\max \{ m,k \} }< \infty $, $s \in {\mathbb N}$. 
\item [(ii)] $u \in \cS ({\mathbb R}^n)  \Longleftrightarrow  |u_j|= O(\lambda_j^{-s}), \, j\to \infty 
\Longleftrightarrow  |u_j|= O(j^{-s}), \, j\to \infty$ for all $s\in \N$.
\end{itemize}
\end{theorem}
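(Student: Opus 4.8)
The plan is to reduce both parts of the statement to a single norm equivalence relating the anisotropic Sobolev scale $Q^s_{m,k}(\R^n)$ to a weighted $\ell^2$-norm of the Fourier coefficients $u_j$, and then to pass between $\lambda_j$ and $j$ via the Weyl asymptotics \eqref{aseigen}. Write $R:=\max\{m,k\}$. Since $P$ is positive and self-adjoint with compact resolvent, the spectral theorem gives, for every real $\sigma\geq0$,
\begin{equation}\label{spectral}
\|(I+P)^{\sigma}u\|_{L^2}^2=\sum_{j=1}^\infty(1+\lambda_j)^{2\sigma}\,|u_j|^2;
\end{equation}
passing to $I+P$ makes the operator strictly positive and is harmless, since $1+\lambda_j\asymp\lambda_j$ as $j\to\infty$ and the finitely many small eigenvalues affect none of the convergence statements. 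Everything then rests on the equivalence
\begin{equation}\label{normequiv}
u\in Q^s_{m,k}(\R^n)\ \Longleftrightarrow\ (I+P)^{s/R}u\in L^2(\R^n),\qquad s\in\N .
\end{equation}

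Establishing \eqref{normequiv} is the main obstacle. By global ellipticity the symbol of $P$ is comparable to $\Lambda_{m,k}=\Lambda^R$ through \eqref{eq1.4a}, so $P$ is elliptic of order $R$ in the scale generated by $\Lambda$ and $(I+P)^{s/R}$ is elliptic of the same order $s$ as $\Lambda(x,D)^s$. Within the anisotropic $\Gamma$-type (Weyl--H\"ormander) pseudodifferential calculus one constructs a parametrix for $(I+P)^{s/R}$ and verifies that the (complex) powers remain in the calculus; composing with $\Lambda(x,D)^{-s}$ and invoking the $L^2$-boundedness of operators of order zero yields the two-sided bound $\|u\|_{Q^s_{m,k}}\asymp\|(I+P)^{s/R}u\|_{L^2}$, which is \eqref{normequiv}. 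When $s$ is a multiple of $R$ one can avoid fractional powers altogether: iterating the elliptic estimate \eqref{ellipticestimate} gives $\|u\|_{Q^{\ell R}_{m,k}}\lesssim\|P^\ell u\|_{L^2}+\|u\|_{L^2}$, while the reverse inequality is just the continuity of the order-$\ell R$ operator $P^\ell\colon Q^{\ell R}_{m,k}\to L^2$; the remaining integer values of $s$ follow by interpolation. The genuine difficulty is precisely the construction and control of these powers of $P$ inside the global calculus.

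Granting \eqref{normequiv}, part (i) is immediate: by \eqref{spectral} (with $\sigma=s/R$) and $1+\lambda_j\asymp\lambda_j$, membership in $Q^s_{m,k}$ amounts to the convergence of the coefficient series recorded in (i). For (ii) I would first record the description $\cS(\R^n)=\bigcap_{s\in\N}Q^s_{m,k}(\R^n)$: the weight $\Lambda^s$ dominates each monomial $x^\beta\xi^\alpha$ once $s$ is large (ellipticity in both the $x$- and the $\xi$-directions), so finiteness of all the $Q^s$-norms forces arbitrary polynomial decay of $u$ and of $\widehat u$, hence $u\in\cS$, the converse inclusion being clear. Consequently $u\in\cS$ iff the series \eqref{spectral} with $\sigma=s/R$ converges for every $s\in\N$, and an elementary argument turns this into pointwise decay: boundedness of the terms of a convergent series gives $|u_j|=O(\lambda_j^{-t})$ for all $t>0$, while conversely such decay makes every one of these series converge because $\sum_j\lambda_j^{-p}<\infty$ for $p$ large, again by \eqref{aseigen}.

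It remains to trade $\lambda_j$ for $j$. By \eqref{aseigen} one has $\lambda_j\asymp j^{\theta}$ with $\theta=\tfrac{mk}{n(m+k)}>0$, so $|u_j|=O(\lambda_j^{-s})$ for all $s\in\N$ holds if and only if $|u_j|=O(j^{-\theta s})$ for all $s$; since the exponents $\theta s$ are cofinal in $(0,\infty)$ as $s$ ranges over $\N$, this is the same as $|u_j|=O(j^{-s})$ for all $s$, which completes the chain of equivalences in (ii). Apart from \eqref{normequiv}, every ingredient --- the spectral identity \eqref{spectral}, the equality $\cS=\bigcap_s Q^s_{m,k}$, the pointwise-versus-$\ell^2$ trade-off, and the substitution from the Weyl law --- is routine.
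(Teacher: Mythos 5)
Your argument is correct and follows essentially the same route as the paper: both reduce everything to the norm equivalence $\|\Lambda(x,D)^s u\|_{L^2}\asymp\|P^{s/\max\{m,k\}}u\|_{L^2}$, apply the Parseval identity to the spectrally defined powers, and obtain (ii) from $\cS(\R^n)=\bigcap_{s\in\N}Q^s_{m,k}(\R^n)$ together with the Weyl asymptotics \eqref{aseigen}. The only difference is that the paper simply cites this norm equivalence from \cite{BBR, NR, shubin}, whereas you sketch its proof via the global parametrix calculus.
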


Let us now come to the characterization of the spaces $\mathcal{S}^\mu_\nu(\R^n)$ and $\Sigma^\mu_\nu(\R^n)$ in the case $\kappa:=\mu/\nu \in \Q.$ We may link $\mu, \nu$ with an operator of the form \eqref{eq1.3} for a suitable choice of $k$ and $m$. In fact, observe first that we may write $\mu=t\mu_o, \nu=t \nu_o$ for some $t >0$ with $\mu_o=\kappa/(1+\kappa), \nu_0=1/(1+\kappa)$ so that $\mu_o+\nu_o = 1$. If $\mu+\nu \geq 1$ we have $t \geq 1$, if $\mu+\nu >1$ then $t >1$. On the other hand, for any given $\mu_o \in \Q$ we may write $\mu_o =k/(k+m)$ for two positive integers $k$ and $m$, and consequently $\nu_o=1-\mu_o=m/(k+m).$ Multiples of $k$ and $m$ work as well, in particular we may assume $k$ and $m$ to be even natural numbers so that the symbol of $\Lambda_{m,k}$ in (\ref{w1})
is a smooth function which is necessary for the proof of the hypoellipticity result of \cite{CGR2}. So we have $$\mu=\frac{kt}{k+m}, \quad \nu = \frac{mt}{k+m}.$$
For given even integers $k$ and $m$, an example of globally elliptic positive operator is given by \eqref{anismod1}. \\

The first main result of the paper characterizes the Gelfand-Shilov spaces in terms of estimates of the iterates of $P$ and reads as follows.

\begin{theorem}\label{t1}
Let $P$ be an operator of the form \eqref{eq1.3} for some integers $k \geq 1, m \geq 1$, be globally elliptic, namely satisfy \eqref{eq1.4} and let $u \in \cS(\R^n)$. Then $u \in \mathcal{S}^{\frac{kt}{k+m}}_{\frac{mt}{k+m}}(\R^n), t \geq 1$ (respectively $u \in \Sigma^{\frac{kt}{k+m}}_{\frac{mt}{k+m}}(\R^n), t > 1$) if and only if there exist $C>0, R>0$ (respectively for every $C>0$ there exists $R>0$) such that:
\begin{equation} \label{iteratesestimate}
\|P^M u\|_{L^2}\leq RC^{M} (M!)^{\frac{kmt}{k+m}}
\end{equation}
for every integer $M \geq 1$.
\end{theorem}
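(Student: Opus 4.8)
The plan is to prove both implications by reducing everything to a single resonance identity, a comparison lemma, and one a priori estimate. Write $\mu=\frac{kt}{k+m}$, $\nu=\frac{mt}{k+m}$ and set $\lambda:=\frac{kmt}{k+m}$, the exponent appearing in \eqref{iteratesestimate}. The crucial algebraic fact, valid precisely because $\kappa=\mu/\nu=k/m$, is the resonance
\begin{equation*}
m\mu=k\nu=\lambda.
\end{equation*}
From it one extracts a \emph{comparison lemma}: there is $B>0$ such that, whenever $\frac{|\alpha|}{m}+\frac{|\beta|}{k}\le M$, one has $(\alpha!)^\mu(\beta!)^\nu\le B^{M}(M!)^{\lambda}$. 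Indeed, using $\alpha!\le|\alpha|!$, $\beta!\le|\beta|!$ and Stirling, on the face $\frac{|\alpha|}{m}+\frac{|\beta|}{k}=M$ the logarithmic exponent reduces to $\lambda\bigl(\sigma\ln\sigma+(M-\sigma)\ln(M-\sigma)\bigr)+O(M)$ with $\sigma=|\alpha|/m$, and the convex function $\sigma\ln\sigma+(M-\sigma)\ln(M-\sigma)$ is maximized at the endpoints $\sigma\in\{0,M\}$, where it equals $M\ln M$. These two endpoints are the sources of the two separate Gevrey scales, and it is exactly the coincidence $m\mu=k\nu$ that allows a single power $(M!)^\lambda$ to dominate both.

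For the necessity (membership $\Rightarrow$ \eqref{iteratesestimate}) I would expand $P^{M}$, composing \eqref{eq1.3} with itself $M$ times, into a finite sum $P^{M}u=\sum_\gamma c^{(M)}_\gamma\,x^{\beta_\gamma}D^{\alpha_\gamma}u$ of monomials of anisotropic weight $\frac{|\alpha_\gamma|}{m}+\frac{|\beta_\gamma|}{k}\le M$. A routine Leibniz count gives, for any fixed $A$, a constant $A_0$ with $\sum_\gamma|c^{(M)}_\gamma|\,A^{|\alpha_\gamma|+|\beta_\gamma|}\le A_0^{M}$. Inserting the $L^2$ form of \eqref{GSdef}, namely $\|x^{\beta_\gamma}D^{\alpha_\gamma}u\|_{L^2}\le CA^{|\alpha_\gamma|+|\beta_\gamma|}(\alpha_\gamma!)^\mu(\beta_\gamma!)^\nu$, and then the comparison lemma, yields $\|P^{M}u\|_{L^2}\le CB^{M}(M!)^{\lambda}\sum_\gamma|c^{(M)}_\gamma|A^{|\alpha_\gamma|+|\beta_\gamma|}\le C(A_0B)^{M}(M!)^{\lambda}$, which is \eqref{iteratesestimate} after renaming constants; the projective ($\Sigma$) case follows by tracking the ``for every $A$'' quantifier.

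For the sufficiency (\eqref{iteratesestimate} $\Rightarrow$ membership) the decisive simplification is to replace the joint condition \eqref{GSdef} by the equivalent decoupled one: for $\mu+\nu\ge1$, $u\in\mathcal{S}^\mu_\nu(\R^n)$ if and only if there is $A>0$ with $\|D^\alpha u\|_{L^2}\le CA^{|\alpha|}(\alpha!)^\mu$ and $\|x^\beta u\|_{L^2}\le CA^{|\beta|}(\beta!)^\nu$ for all $\alpha,\beta$ (the Fourier characterization of \cite{CCK}), with the analogous projective statement for $\Sigma^\mu_\nu$. The point is that these are the two \emph{endpoints} of the weight face, where the comparison lemma is sharp: for $|\alpha|=mM$ one has $(M!)^\lambda\asymp C^{|\alpha|}(\alpha!)^\mu$ and for $|\beta|=kM$ one has $(M!)^\lambda\asymp C^{|\beta|}(\beta!)^\nu$, the multinomial gap $|\alpha|!/\alpha!\le n^{|\alpha|}$ being harmless. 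Thus producing the clean power $(M!)^\lambda$ on the full weight ball suffices to recover both decoupled scales, whereas the joint estimate for balanced monomials would be strictly over-estimated by $(M!)^\lambda$ — which is exactly why the decoupling is needed.

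What remains, and what I expect to be the main obstacle, is the \emph{iterated elliptic estimate}
\begin{equation*}
\sum_{\frac{|\alpha|}{m}+\frac{|\beta|}{k}\le M}\|x^\beta D^\alpha u\|_{L^2}\le C_0^{M}\sum_{\ell=0}^{M}\frac{(M!)^{\lambda}}{(\ell!)^{\lambda}}\,\|P^\ell u\|_{L^2},
\end{equation*}
to be proved by induction on $M$, the base case being the global ellipticity estimate \eqref{ellipticestimate}. The inductive step factors a weight-$\le M$ monomial as a weight-$\le1$ operator composed with a weight-$\le(M-1)$ one and applies \eqref{ellipticestimate} to the latter acting on $u$; the errors are governed by the commutators $[P,x^\beta D^\alpha]$, which in the $(m,k)$-anisotropic calculus strictly lower the weight (the leading commutator term drops it by $\tfrac1m+\tfrac1k$), so that the induction closes. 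The delicate part is bookkeeping the factorial constants so that the accumulated Gevrey loss is pinned exactly to the exponent $\lambda$; here the resonance $m\mu=k\nu$ enters a second time, now dynamically, guaranteeing that the derivative-side and multiplication-side losses accumulate at the same rate. Granting this estimate, substitution of the hypothesis \eqref{iteratesestimate} gives $\sum_{\mathrm{wt}\le M}\|x^\beta D^\alpha u\|_{L^2}\le\widetilde C^{M}(M!)^{\lambda}$; specializing to $D^\alpha u$ with $M=\lceil|\alpha|/m\rceil$ and to $x^\beta u$ with $M=\lceil|\beta|/k\rceil$ and invoking the endpoint comparison furnishes the two decoupled bounds, completing the proof via \cite{CCK}.
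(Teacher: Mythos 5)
Your overall architecture is the same as the paper's: reduce membership in $\mathcal{S}^{\mu}_{\nu}$ to bounds on the mixed weighted norms $\sum_{|\alpha|/m+|\beta|/k\le M}\|x^\beta D^\alpha u\|_{L^2}$, and prove the hard direction by an induction that iterates the global elliptic estimate \eqref{ellipticestimate} while controlling the commutators $[P,x^\delta D^\gamma]$. Your ``iterated elliptic estimate'' is, up to normalization, exactly the paper's Lemma \ref{inductiveest}. But there are two genuine gaps.

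First, and most importantly, the step where you say the induction closes because the commutators ``strictly lower the weight'' is precisely where it does \emph{not} close as stated. A commutator term lowers the anisotropic weight by $\tfrac{m+k}{km}|\tau|$, and $\tfrac1m+\tfrac1k\le 1$, so the resulting monomials generically have weight $r$ with $M-1<r<M$: they are \emph{not} covered by the inductive hypothesis at level $M-1$. The paper handles this with a dedicated Kotake--Narasimhan-type interpolation inequality (Lemma \ref{AntiWick}), $|u|_r\le\varepsilon|u|_{p+1}+C\varepsilon^{-\frac{r-p}{p+1-r}}|u|_p+C^p(p+1)!^{\frac{km}{k+m}}|u|_0$, applied with $\varepsilon$ chosen as a precise negative power of $p$ so that the top term is absorbed with coefficient $\tfrac14$ and the factorial losses are pinned to the exponent $\tfrac{kmt}{k+m}$ (Lemma \ref{sigma}). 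This absorption argument, together with the growth $p^{|\tau|}$ of the commutator coefficients, is the technical heart of the proof; your proposal identifies it as ``the main obstacle'' but does not supply it.

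Second, in the necessity direction the claimed geometric bound $\sum_\gamma|c^{(M)}_\gamma|A^{|\alpha_\gamma|+|\beta_\gamma|}\le A_0^{M}$ for the normal-ordered expansion of $P^M$ is false: each time a derivative block $D^{\tilde\alpha}$ is commuted past an accumulated monomial $x^\delta$ with $|\delta|\lesssim kM$, the Leibniz rule produces factors $\delta!/(\delta-\tau)!\le(kM)^{|\tau|}$, so the coefficients grow factorially in $M$, roughly like $M^{\frac{km}{m+k}M}\asymp (M!)^{\frac{km}{m+k}}$. The conclusion $\|P^Mu\|_{L^2}\le RC^M(M!)^{\frac{kmt}{k+m}}$ can still be reached, but only by offsetting this coefficient growth against the corresponding drop in the weight of the monomial (so that $(\alpha_\gamma!)^\mu(\beta_\gamma!)^\nu$ is bounded by the factorial of the \emph{reduced} weight), which requires the same pairing of coefficient growth with weight loss that you elide. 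Your resonance identity $m\mu=k\nu=\tfrac{kmt}{k+m}$ and the convexity argument behind the comparison lemma are correct and are indeed the reason the single exponent $(M!)^{\frac{kmt}{k+m}}$ captures both Gevrey scales.
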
  

\begin{remark}
Theorem \ref{t1} suggests the possibility of considering new function spaces defined by the estimates \eqref{iteratesestimate}
also for $0< t <1$ (respectively $0<t \leq 1$). Corresponding Gelfand-Shilov classes are empty in that case as well known from \cite{GeShi} and the equivalence in Theorem \ref{t1} fails. Nevertheless such definition in terms of \eqref{iteratesestimate} deserves interest, cf. also \cite{CST, T1}.
\end{remark}

Using Theorem \ref{t1} we can prove the following result.

\begin{theorem}\label{t2}
Let $P$ be a positive operator of the form \eqref{eq1.3} for some integers $k \geq 1, m \geq 1$, satisfying \eqref{eq1.4} and let $u \in \cS(\R^n)$.  Let the eigenvalues $\lambda_j$ and the Fourier coefficients $u_j$ be defined as before. The following conditions are equivalent:
\\
i) $u \in \mathcal{S}^{\frac{kt}{k+m}}_{\frac{mt}{k+m}}(\R^n), t \geq 1$ (respectively $u \in \Sigma^{\frac{kt}{k+m}}_{\frac{mt}{k+m}}(\R^n), t > 1$); \\
ii) there exists $\varepsilon>0$ such that (respectively for every $\varepsilon >0$) we have
\begin{equation}\label{aseigenexp1}
\sum_{j=1}^\infty |u_j|^2 e^{\epsilon \lambda_j^{\frac{k+m}{kmt}}} < \infty;
\end{equation}
iii) there exists $\varepsilon>0$ such that (respectively for every $\varepsilon >0$) we have
\begin{equation}\label{aseigenexp2}
\sup_{j\in \N} |u_j|^2 e^{\epsilon \lambda_j^{\frac{k+m}{kmt}}} < \infty.
\end{equation}
iv) there exists $\varepsilon>0$ such that (respectively for every $\varepsilon >0$) we have for some $C>0$:
$$|u_j| \leq C e^{-\varepsilon j^{\frac{1}{tn}}}, \qquad j \in \N.$$
\end{theorem}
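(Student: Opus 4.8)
The plan is to use Theorem \ref{t1} to replace condition (i) by the iterate estimate \eqref{iteratesestimate}, and then to pass between this estimate and the coefficient conditions through the eigenfunction expansion. Writing $\sigma := \frac{kmt}{k+m}$, so that the weight exponent $\frac{k+m}{kmt}$ appearing in \eqref{aseigenexp1}--\eqref{aseigenexp2} equals $1/\sigma$, the first step is to observe that $P^M u = \sum_{j=1}^\infty \lambda_j^M u_j \varphi_j$ and hence, by orthonormality of the $\varphi_j$ and Parseval's identity,
\[
\|P^M u\|_{L^2}^2 = \sum_{j=1}^\infty \lambda_j^{2M} |u_j|^2 .
\]
Thus \eqref{iteratesestimate} is equivalent to the moment bound $\sum_j \lambda_j^{2M}|u_j|^2 \leq R^2 C^{2M}(M!)^{2\sigma}$ for all $M$, and the whole statement reduces to comparing such moment bounds with the exponential conditions (ii)--(iv).

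The core of the argument is the elementary duality between Gevrey growth of moments and exponential decay. Optimizing over $M$ by means of Stirling's formula yields, for $y>0$,
\[
\sup_{M \geq 0} \frac{y^M}{(M!)^{2\sigma}} \asymp \exp\!\big(2\sigma\, y^{1/(2\sigma)}\big),
\]
the two sides controlling each other up to subexponential factors and multiplicative constants. Applying this with $y=\lambda_j^2/C^2$ converts the supremum form of the moment bound into $|u_j|^2 \lesssim \exp(-\epsilon\,\lambda_j^{1/\sigma})$ for a suitable $\epsilon>0$, which is precisely (iii); conversely, the same lemma turns (iii) into the termwise estimate $\lambda_j^{2M}|u_j|^2 \lesssim B^M (M!)^{2\sigma}$.

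To pass from this termwise bound back to the summed estimate in \eqref{iteratesestimate}, I would split the weight as $e^{-\epsilon\lambda_j^{1/\sigma}} = e^{-\frac{\epsilon}{2}\lambda_j^{1/\sigma}}\cdot e^{-\frac{\epsilon}{2}\lambda_j^{1/\sigma}}$, using the first factor together with the duality lemma to absorb the polynomial weight $\lambda_j^{2M}$ into $B^M(M!)^{2\sigma}$, and the second factor to ensure convergence of the residual series $\sum_j e^{-\frac{\epsilon}{2}\lambda_j^{1/\sigma}}$. The equivalence (ii) $\Leftrightarrow$ (iii) then follows in the same spirit, by trading a slightly smaller $\epsilon$ against this convergent series. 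Both the convergence and the equivalence (iii) $\Leftrightarrow$ (iv) rest on the Weyl asymptotics \eqref{aseigen}: since $\lambda_j \sim C j^{mk/(n(m+k))}$, one computes $\lambda_j^{1/\sigma} \sim C' j^{1/(nt)}$, so the weight exponent is comparable to $j^{1/(nt)}$, the series converge, and $e^{-\epsilon\lambda_j^{1/\sigma}}$ is identified with $e^{-\epsilon' j^{1/(nt)}}$ up to constants, which is exactly (iv).

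Finally I would track the quantifiers to treat the inductive and projective cases at once: the ``there exist $C,R$'' in \eqref{iteratesestimate} matches ``there exists $\epsilon$'' in (ii)--(iv), while the ``for every $C$'' formulation matches ``for every $\epsilon$'', and the duality lemma is quantitatively precise enough to transfer these statements. I expect the main obstacle to be not any single inequality but the careful bookkeeping of constants through the duality lemma and the exponential splitting, particularly in the projective $\Sigma$ case where each implication must preserve the arbitrariness of $\epsilon$; a secondary subtlety is that \eqref{aseigen} is only asymptotic, so the passage between $\lambda_j$ and $j^{mk/(n(m+k))}$ should be carried out with two-sided bounds valid for large $j$, the finitely many small $j$ being absorbed into the constants.
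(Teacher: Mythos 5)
Your proposal is correct and follows essentially the same route as the paper: Theorem \ref{t1} plus Parseval reduces everything to the moment bounds $\sum_j \lambda_j^{2M}|u_j|^2 \leq R^2C^{2M}(M!)^{2\sigma}$, and the paper's key step is exactly your duality lemma, in the form $j^{M/(nt)}e^{-\omega j^{1/(nt)}} \leq \omega^{-M}M!$ (obtained from the exponential series) in one direction and taking the supremum over $M$ of $j^{2Mkm/(n(k+m))}C^{-M}(M!)^{-2kmt/(k+m)}|u_j|^2$ in the other, with the Weyl asymptotics \eqref{aseigen} identifying $\lambda_j^{(k+m)/(kmt)}$ with $j^{1/(nt)}$. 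Your explicit splitting of the exponential weight to separate the supremum from the convergent residual series is, if anything, slightly more careful than the paper's presentation of the same step.
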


The somewhat surprising fact that in $ iv)$  the estimates do not depend on the couple $(m,k)$, that is on $(\mu,\nu)$, may find intuitive explanation in the $\mathcal{S}^\mu_\nu$ regularity of the eigenfunctions $\varphi_j$, cf. \cite{CGR2}.

\section{Proof of the main results}
\textit{Proof of Theorem \ref{t1.1}.}
The proof of Theorem \ref{t1.1} is easy, by using the $r$-th power of $P, r \in \R$, that we may define as 
$$P^r u = \sum_{j=1}^\infty \lambda_j^r u_j \varphi_j,$$
and by observing that the norms $\|P^r u\|_{L^2}, r= s/\max\{k,m \}$ and $\| \Lambda(x,D)^s u\|_{L^2}$ are equivalent, see \cite{BBR, NR, shubin}. On the other hand, by Parseval identity 
$$\|P^r u\|_{L^2}^2 = \| \sum_{j=1}^\infty \lambda_j^r u_j \varphi_j\|_{L^2}^2 = \sum_{j=1}^\infty \lambda_j^{2r}|u_j|^2$$
and $i)$ follows. Since $\cS(\R^n) = \bigcap\limits_{s \in \N} Q_{m,k}^s(\R^n)$ we also obtain $ii)$. \qed
\\

The proof of Theorem \ref{t1} needs some preparation. We first define, for fixed $ r \geq 0 $ and $u \in L^2(\R^n)$:
\begin{equation}
|u|_r = \sum_{\frac{|\alpha|}{m}+\frac{|\beta|}{k}=r} \| x^{\beta}D^\alpha u \|_{L^2}
\end{equation}

First it is useful to characterize Gelfand-Shilov spaces in terms of the norms $|u|_r$ as follows.

\begin{proposition} \label{char}
Let $u \in L^2(\R^n).$ Then $u \in \mathcal{S}^{\frac{kt}{k+m}}_{\frac{mt}{k+m}}(\R^n), t \geq 1$ (respectively $u \in \Sigma^{\frac{kt}{k+m}}_{\frac{mt}{k+m}}(\R^n), t > 1$) if and only if there exist $C>0, R>0$ (respectively for every $C>0$ there exists $R>0$) such that
\begin{equation} \label{charest}
|u|_r \leq RC^{r} r^{\frac{kmrt}{k+m}}
\end{equation}
for every $ r>0$.
\end{proposition}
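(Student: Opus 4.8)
The plan is to reduce the statement to a purely combinatorial two-sided estimate on factorials and then read off both implications almost for free. First I would pass from the sup-norm definition \eqref{GSdef} to the equivalent $L^2$ formulation: with $\mu=kt/(k+m)$, $\nu=mt/(k+m)$, one has $u\in\mathcal{S}^\mu_\nu(\R^n)$ (resp. $\Sigma^\mu_\nu$) if and only if there exist $A,C>0$ (resp. for every $A$ there is $C$) with $\|x^\beta D^\alpha u\|_{L^2}\leq C A^{|\alpha|+|\beta|}(\alpha!)^\mu(\beta!)^\nu$ for all $\alpha,\beta\in\N^n$; this equivalence is standard and I would quote it from \cite{NR, GeShi}. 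Using $\alpha!\leq|\alpha|!\leq n^{|\alpha|}\alpha!$ (and similarly for $\beta$), the factors $(\alpha!)^\mu,(\beta!)^\nu$ may be replaced by $(|\alpha|!)^\mu,(|\beta|!)^\nu$ at the cost of a harmless $\mathrm{const}^{\,|\alpha|+|\beta|}$.

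Writing $p=|\alpha|$, $q=|\beta|$ and $r=p/m+q/k$, everything then hinges on the following elementary fact, which I regard as the heart of the proof: there are constants $c_1,c_2>0$, depending only on $k,m,t$, such that
\begin{equation}
c_1^{\,r}\, r^{\frac{kmrt}{k+m}}\ \leq\ (p!)^{\mu}(q!)^{\nu}\ \leq\ c_2^{\,r}\, r^{\frac{kmrt}{k+m}}
\end{equation}
for all integers $p,q\geq 0$ with $p/m+q/k=r$. I would prove this by Stirling's formula. Setting $p=m\sigma$, $q=k\tau$ with $\sigma+\tau=r$ and using $\mu m=\nu k=kmt/(k+m)=:c$, one gets $\log[(p!)^\mu(q!)^\nu]=c\,[\sigma\log\sigma+\tau\log\tau]+O(r)$, and the bracket equals $r\log r+r[x\log x+(1-x)\log(1-x)]$ after the substitution $\sigma=xr$, $\tau=(1-x)r$, $x\in[0,1]$. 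The decisive observation is that the leading term $cr\log r=\frac{kmrt}{k+m}\log r$ is \emph{independent} of the splitting $x$, while the entropy correction $x\log x+(1-x)\log(1-x)$ stays bounded in $[-\log 2,0]$; hence the $O(r)$ discrepancy is uniform in $(p,q)$ and is absorbed into the constants $c_1^r,c_2^r$.

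With this two-sided bound in hand both directions are short. For the forward implication I would insert the $L^2$-estimate into the definition of $|u|_r$, bound $(\alpha!)^\mu(\beta!)^\nu\leq c_2^r r^{cr}\,\mathrm{const}^{\,p+q}$, note that $p+q\leq(m+k)r$ so that $A^{p+q}$ and all fixed exponential-in-$r$ factors merge into a single $C^r$, and observe that the number of pairs $(\alpha,\beta)$ entering the sum grows only polynomially in $r$; this yields \eqref{charest}. For the converse I would use that each single norm $\|x^\beta D^\alpha u\|_{L^2}$ is one summand of $|u|_{r}$ with $r=p/m+q/k$, whence $\|x^\beta D^\alpha u\|_{L^2}\leq|u|_r\leq RC^r r^{cr}$; here the \emph{lower} bound $r^{cr}\leq c_1^{-r}(p!)^\mu(q!)^\nu$ is exactly what converts this into $\|x^\beta D^\alpha u\|_{L^2}\leq R\,A^{p+q}(\alpha!)^\mu(\beta!)^\nu$, recovering the $L^2$-Gelfand--Shilov estimate. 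The inductive/projective dichotomy ($t\geq 1$ versus $t>1$, i.e. $\mathcal{S}$ versus $\Sigma$) passes through verbatim, since all auxiliary constants depend only on $k,m,n,t$: an arbitrarily small $A$ produces an arbitrarily small $C$ and conversely.

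The main obstacle, and the only place requiring genuine care, is the factorial estimate above, specifically its lower bound. Indeed, the critical-point analysis shows that the correction $x\log x+(1-x)\log(1-x)$ is minimized at the interior point $x=k/(k+m)$ and vanishes at the extremes $x\in\{0,1\}$, so that for non-extremal multi-indices $(p!)^\mu(q!)^\nu$ is genuinely smaller than $r^{cr}$. It is precisely the uniformity of this gap — its confinement to a factor $c_1^{\pm r}$ — that makes a single scalar growth rate $r^{\frac{kmrt}{k+m}}$ on the right-hand side of \eqref{charest} compatible with the full family of individual Gelfand--Shilov bounds.
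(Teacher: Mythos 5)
Your argument is correct. Note that the paper itself gives no proof of Proposition \ref{char}: it is stated without argument (the ``proof omitted'' remark that follows refers to Lemma \ref{AntiWick}, for which the authors point to Proposition 2.1 of \cite{CR}), so there is nothing in-paper to compare against; results of this kind are treated as known from \cite{CR} and \cite[Chapter 6]{NR}. Your self-contained route is sound. The reduction from \eqref{GSdef} to its $L^2$ version is standard, and the heart of the matter is indeed the two-sided bound $c_1^{r}\,r^{kmrt/(k+m)}\le (p!)^{\mu}(q!)^{\nu}\le c_2^{r}\,r^{kmrt/(k+m)}$ for $p/m+q/k=r$; your Stirling computation establishes it correctly, the decisive point being that $\mu m=\nu k=kmt/(k+m)$ makes the leading term $cr\log r$ independent of the splitting while the entropy defect contributes only $O(r)$ to the exponent and is absorbed into $C^{r}$. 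The remaining bookkeeping also checks out: there are only polynomially many multi-indices at each level $r$, and $\min(m,k)\,r\le p+q\le \max(m,k)\,r$ lets you trade $A^{p+q}$ against $C^{r}$ in both directions, which is exactly what is needed to pass the inductive/projective dichotomy through. One small inaccuracy in your closing discussion: the correction $x\log x+(1-x)\log(1-x)$ is minimized at $x=1/2$, not at $x=k/(k+m)$; this is immaterial, since only its boundedness on $[0,1]$ enters the proof.
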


We have the following preliminary result.

\begin{lemma}
\label{AntiWick}
There exists a constant $C>0$ such that, for any given $p \in \N, (\alpha, \beta ) \in \N^{2n}$, with $|\alpha|/m+|\beta|/k =r, p<r<p+1,$ and for every $\varepsilon >0$, the following estimate holds true:
\begin{equation} \label{AW}
|u|_r \leq \varepsilon |u|_{p+1}+C \varepsilon^{-\frac{r-p}{p+1-r}}|u|_p + C^p (p+1)!^{\frac{km}{k+m}}|u|_0
\end{equation}
for all $u \in \cS(\R^n)$.
\end{lemma}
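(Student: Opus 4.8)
The plan is to derive \eqref{AW} from a logarithmic convexity (interpolation) inequality for the seminorms $|\cdot|_\rho$ between the two consecutive integer levels $p$ and $p+1$, modulo a remainder of order zero. Write $r=p+\theta$ with $\theta=r-p\in(0,1)$. Since $|\alpha|/m+|\beta|/k$ always lies in $\frac{1}{km}\Z_{\ge 0}$, the fractional part $\theta$ ranges over the finite set $\{1/(km),\dots,(km-1)/(km)\}$, hence stays bounded away from $0$ and $1$; consequently every constant depending only on $\theta$ (and on $k,m,n$) is uniformly bounded and may be absorbed into the single constant $C$. The target is an estimate of the shape $|u|_r\le C'\,|u|_p^{1-\theta}|u|_{p+1}^{\theta}+(\text{remainder})$. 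Once this is available, Young's inequality with conjugate exponents $1/\theta$ and $1/(1-\theta)$ turns the product $|u|_p^{1-\theta}|u|_{p+1}^{\theta}$ into $\varepsilon\,|u|_{p+1}+C\,\varepsilon^{-\theta/(1-\theta)}|u|_p$, which is exactly the first two summands of \eqref{AW} (note $\theta/(1-\theta)=(r-p)/(p+1-r)$).

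The convexity itself I would obtain by iterated integration by parts. For a single monomial with $|\alpha|/m+|\beta|/k=r$ one has, after integrating by parts and commuting $x$ past $D$, an identity of the form $\|x^\beta D^\alpha u\|^2=(x^{\beta'}D^{\alpha'}u,\,x^{\beta''}D^{\alpha''}u)+(\text{commutator terms})$, where the two factors have anisotropic orders summing to $2r$ and the commutator terms have strictly lower order; e.g.\ in one variable $\|xDu\|^2=(x^2D^2u,u)+c\,(xDu,u)$ already exhibits a genuinely lower-order correction coming from $[x,D]$. Applying Cauchy--Schwarz and choosing the factors so as to push their orders toward the integer endpoints $p$ and $p+1$, and iterating the descent, I expect to reach the interpolation inequality between levels $p$ and $p+1$, while every commutator lowers the order by one anisotropic unit and is estimated by a seminorm of lower index times a bounded constant. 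Collecting the fully reduced (order-zero) contributions into $|u|_0$ and counting: the descent from level $\approx p+1$ down to $0$ takes on the order of $p$ steps, the combinatorial multiplicity of commutators over these steps produces the factor $C^p$, and re-expressing the number of differentiations/multiplications in the $r$-scale yields the growth $(p+1)^{\frac{km(p+1)}{k+m}}\asymp (p+1)!^{\frac{km}{k+m}}$ (Stirling). This is exactly the remainder $C^p(p+1)!^{km/(k+m)}|u|_0$, and I note that it matches the critical ($t=1$) Gelfand--Shilov growth in \eqref{charest}, so it is harmless in the later bootstrap of Theorem \ref{t1}.

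An alternative I would keep in reserve is to realize the anisotropic weight $\Lambda_{m,k}$ as a positive self-adjoint (or anti-Wick) operator, use the exact spectral convexity $\|\Lambda^{r}u\|\le\|\Lambda^{p}u\|^{1-\theta}\|\Lambda^{p+1}u\|^{\theta}$, and then pass between the seminorms $|u|_\ell$ and $\|\Lambda^\ell u\|$ at the integer levels by the elliptic/Shubin calculus (cf.\ \eqref{ellipticestimate}); in that route the remainder with its factorial growth arises from the commutator errors of the calculus rather than from explicit integration by parts.

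The main difficulty is quantitative rather than conceptual: one must carry out the descent so that every correction term is genuinely of lower anisotropic order and ultimately collapses to $|u|_0$ (not to an intermediate seminorm), and so that the number of monomials of order $r$ together with all commutator constants is controlled by a single $C$ and a single power $C^p$ independent of $p$, $r$, $\varepsilon$ and $u$, yielding precisely $C^p(p+1)!^{km/(k+m)}$. The anisotropy --- the mismatched weights $k$ for $x$ and $m$ for $D$ --- makes this bookkeeping more delicate than in the isotropic Shubin case treated in \cite{gpr1}, since a single commutator step no longer changes the order by a symmetric amount and one must balance the two endpoints $p,p+1$ correctly; verifying the uniformity of $\theta/(1-\theta)$ and of the Young constants across the finite set of admissible fractional parts is then the remaining routine check.
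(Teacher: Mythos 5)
Your plan coincides with the paper's own route: the authors omit the proof and refer to Proposition 2.1 of \cite{CR} (cf.\ also \cite{KN}), which is precisely the integration-by-parts/Cauchy--Schwarz log-convexity of $\rho\mapsto|u|_\rho$ between the integer levels $p$ and $p+1$ followed by Young's inequality with exponents $1/\theta$, $1/(1-\theta)$, the commutator corrections being pushed down and collected into the remainder $C^p(p+1)!^{\frac{km}{k+m}}|u|_0$ exactly as you describe. Your observation that the fractional part $\theta=r-p$ ranges over the finite set $\{1/(km),\dots,(km-1)/(km)\}$, so that all $\theta$-dependent constants are uniformly bounded, is the correct way to obtain a single constant $C$ independent of $r$ and $\varepsilon$.
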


The proof follows the same lines as the proof of Proposition 2.1 in \cite{CR}, cf. also \cite{KN}, and it is omitted.

 \par

Next, fixed $\lambda >0, p \in \N$ and $ u \in L^2(\R^n)$, we set:
\begin{equation}
\sigma_p(u, \lambda) = \lambda^{-p} (p!)^{-\frac{kmt}{k+m}} |u|_p.
\end{equation}

\begin{lemma} \label{sigma}
For every $p \in \N$ and for  $\lambda >0$ sufficiently large, we have:
\begin{equation} \label{stimasigma}
\sigma_{p+1}( u, \lambda) \leq (p+1)^{-\frac{kmt}{k+m}} \sigma_p (Pu, \lambda) + \sum_{h=0}^{p} \sigma_h (u,\lambda)
\end{equation}
for every $u \in \cS(\R^n).$
\end{lemma}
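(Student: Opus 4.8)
The plan is to prove Lemma \ref{sigma} by first rewriting the claimed inequality purely in terms of the norms $|\cdot|_r$ and then deriving it from the elliptic estimate \eqref{ellipticestimate} together with the interpolation Lemma \ref{AntiWick}. Multiplying \eqref{stimasigma} through by $\lambda^{p+1}\,((p+1)!)^{\frac{kmt}{k+m}}$ and using $((p+1)!)^{\frac{kmt}{k+m}}(p+1)^{-\frac{kmt}{k+m}}=(p!)^{\frac{kmt}{k+m}}$, one checks that the statement is equivalent to
\begin{equation} \label{plan:reform}
|u|_{p+1} \leq \lambda\,|Pu|_p + \sum_{h=0}^{p} \lambda^{\,p+1-h}\left(\frac{(p+1)!}{h!}\right)^{\frac{kmt}{k+m}}|u|_h .
\end{equation}
Thus the whole point is to bound the top-order norm $|u|_{p+1}$ by $|Pu|_p$ plus a weighted sum of the lower-order norms $|u|_h$, the weights being governed by the factorial $(p+1)!^{\frac{kmt}{k+m}}$, and everything is then arranged so that one single sufficiently large $\lambda$ absorbs all multiplicative constants.

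To generate the top-order norm I would localize to a single order-$(p+1)$ monomial $x^\beta D^\alpha$, with $|\alpha|/m+|\beta|/k=p+1$, and factor it as an order-one operator applied to an order-$p$ monomial $v=x^{\beta'}D^{\alpha'}u$, $|\alpha'|/m+|\beta'|/k=p$, at the cost of reordering commutators. Applying \eqref{ellipticestimate} to $v$ bounds every order-$\le 1$ operator applied to $v$ by $C(\|Pv\|_{L^2}+\|v\|_{L^2})$; summing over the finitely many admissible order-$p$ indices $(\alpha',\beta')$ reconstructs $|u|_{p+1}$ up to lower-order contributions. Here $\|v\|_{L^2}$ is controlled by $|u|_p$, whereas in $\|Pv\|_{L^2}$ I would commute $P$ past the monomial, $Pv=x^{\beta'}D^{\alpha'}Pu+[P,x^{\beta'}D^{\alpha'}]u$, so the principal part contributes $|Pu|_p$. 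This yields a preliminary bound of the shape $|u|_{p+1}\leq C|Pu|_p+C|u|_p+(\text{commutator terms})$.

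The commutators, both the reordering ones and $[P,x^{\beta'}D^{\alpha'}]$, are sums of monomials of order strictly below $p+1$ (generally non-integer): each contraction of an $x$ against a $D$ lowers the order by $1/m+1/k$ and, since $P$ has fixed order $\le 1$ (cf. \eqref{eq1.3}), produces a combinatorial coefficient growing only like a fixed power of $p$. To re-express the resulting intermediate norms $|u|_r$ in terms of integer orders I would invoke Lemma \ref{AntiWick}, bounding each such $|u|_r$ with $p<r<p+1$ by $\varepsilon|u|_{p+1}+C\varepsilon^{-\frac{r-p}{p+1-r}}|u|_p+C^p(p+1)!^{\frac{km}{k+m}}|u|_0$, and iterating downward at the lower integer levels for the intermediate orders below $p$. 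Choosing $\varepsilon$ a small inverse power of $\lambda$ lets me absorb the finitely many $\varepsilon|u|_{p+1}$ terms into the left-hand side, while the rest feed into $|u|_p$ and $|u|_0$.

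The final step is the normalization: dividing back by $\lambda^{p+1}((p+1)!)^{\frac{kmt}{k+m}}$. For $\lambda$ large the constant multiplying $|Pu|_p$ is $\le\lambda$, matching the coefficient in \eqref{plan:reform} and yielding $(p+1)^{-\frac{kmt}{k+m}}\sigma_p(Pu,\lambda)$; every lower-order contribution comes divided by the weight $\lambda^{p+1-h}((p+1)!/h!)^{\frac{kmt}{k+m}}$ with $p+1-h\geq1$. The crux of the argument, and the place where $t\geq 1$ enters, is the comparison of these weights with the combinatorial growth produced by the commutators and by the factor $(p+1)!^{\frac{km}{k+m}}$ of Lemma \ref{AntiWick}: the available weight carries the extra power $((p+1)!/h!)^{\frac{km(t-1)}{k+m}}\geq 1$, so that together with $(C/\lambda)^{p+1}$ it dominates the borderline $t=1$ combinatorial factors, making each normalized lower-order term at most $\sigma_h(u,\lambda)$ and the sum reproduce $\sum_{h=0}^p\sigma_h(u,\lambda)$. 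The main obstacle is precisely this uniform-in-$p$ bookkeeping: one must verify that all the combinatorial and interpolation constants are absorbed by one and the same large $\lambda$, independent of $p$, and compatibly with the $\varepsilon$-absorption of the top order (the symmetric Shubin scaling $t=1$ being exactly the limiting case, which explains why the statement is confined to $t\geq 1$).
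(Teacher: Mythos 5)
Your proposal follows essentially the same route as the paper's proof: factor each order-$(p+1)$ monomial through an order-$p$ monomial $v=x^{\delta}D^{\gamma}u$, apply the elliptic estimate \eqref{ellipticestimate} to $v$, split $Pv$ into $x^{\delta}D^{\gamma}Pu$ plus the commutator $[P,x^{\delta}D^{\gamma}]u$ (together with the reordering commutator), control the resulting non-integer intermediate orders via Lemma \ref{AntiWick} with a small $\varepsilon$ absorbed into the top order, and finally use $t\geq 1$ and one $p$-independent large $\lambda$ to dominate the commutator combinatorics after normalization. The only cosmetic differences are that you first clear denominators to restate the inequality in terms of the norms $|\cdot|_r$ and that you take $\varepsilon$ as an inverse power of $\lambda$ where the paper takes an inverse power of $p$; neither changes the argument.
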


\begin{proof} 
For $p=0$ the assertion is a direct consequence of \eqref{ellipticestimate} if $\lambda$ is large enough. 
Fix now $p \in \N, p \geq 1$ and let $\alpha, \beta \in \N^n$ such that $|\alpha|/m+|\beta|/k =p+1.$ It is easy to verify that we can find $\gamma, \delta \in \N^n,$ with $\gamma \leq \alpha, \delta \leq \beta$ such that $|\gamma|/m+|\delta|/k =p$ and $|\alpha-\gamma|/m+|\beta-\delta|/k =1.$ Then by \eqref{ellipticestimate} we can write
\begin{eqnarray*}
\| x^{\beta}D^\alpha u \|_{L^2} &\leq & \| x^{\beta-\delta}D^{\alpha-\gamma}(x^\delta D^\gamma u) \|_{L^2} + \| x^{\beta-\delta}[x^\delta, D^{\alpha-\gamma}]D^\gamma u \|_{L^2} \\
& \leq & C\| P(x^\delta D^\gamma u) \|_{L^2} + \| x^{\beta-\delta}[ x^\delta, D^{\alpha-\gamma}]D^\gamma u\|_{L^2} \\
& \leq & I_1 + I_2 +I_3,
\end{eqnarray*}
where 
$$I_1 = C \|x^\delta D^\gamma (Pu) \|_{L^2}, \qquad I_2 =C \| [P, x^\delta D^\gamma ] u \|_{L^2}, \qquad I_3 = \| x^{\beta-\delta}[ x^\delta, D^{\alpha-\gamma}]D^\gamma u\|_{L^2}.$$
Let now 
$$J_h = \sum_{\frac{|\alpha|}{m}+\frac{|\beta|}{k} =p+1}I_h, \qquad Y_h = \lambda^{-p-1} (p+1)!^{-\frac{kmt}{k+m}}J_h, \quad h=1,2,3.$$
Then, obviously we have
$$|u|_{p+1} \leq J_1+J_2+J_3, \qquad  \sigma_{p+1} (\lambda, u)  \leq Y_1+Y_2+Y_3.$$
Now, since $J_1 \leq C_1|Pu|_p$ for some $C_1 >0$, then we have $Y_1 \leq (p+1)^{-\frac{kmt}{k+m}} \sigma_p(\lambda, Pu),$ if $\lambda \geq C_1^{-1}$. To estimate $J_2$ and $Y_2$ we observe that
$$[P, x^\delta D^\gamma ]u = \sum_{\frac{|\tilde{\alpha}|}{m}+\frac{|\tilde{\beta}|}{k} \leq 1} c_{\tilde{\alpha} \tilde{\beta}}
[x^{\tilde{\beta}}D^{\tilde{\alpha}}, x^\delta D^\gamma] u,
$$
and that
$$
[x^{\tilde{\beta}}D^{\tilde{\alpha}}, x^\delta D^\gamma] u = \hskip-3pt \sum_{0 \neq \tau \leq \tilde{\alpha}, \tau \leq \delta} \hskip-4pt C_{\tilde{\alpha} \delta \tau}x^{\delta+\tilde{\beta}-\tau}D^{\gamma + \tilde{\alpha}-\tau} u - \hskip-3pt \sum_{0 \neq \tau \leq \tilde{\beta}, \tau \leq \gamma} \hskip-4pt C_{\tilde{\beta} \gamma \tau}x^{\delta+\tilde{\beta}-\tau}D^{\gamma + \tilde{\alpha}-\tau} u.
$$
where the constants $|C_{\tilde{\alpha} \delta \tau}|$ and $| C_{\tilde{\beta} \gamma \tau}|$ can be estimated by
$C_2 \, p^{|\tau|}$ for some positive constant $C_2$ independent of $p$.
We observe now that in both the sums above we have $$r= \frac{|\gamma + \tilde{\alpha}-\tau|}{m} + \frac{|\delta+\tilde{\beta}-\tau|}{k} = p+ \frac{| \tilde{\alpha}|}{m} + \frac{ |\tilde{\beta}|}{k} - \frac{m+k}{km} |\tau| \leq p+1- \frac{m+k}{km} |\tau|,$$
hence in particular we have $ 0 \leq r <p+1
$ since $|\tau|>0$. Moreover, we have
$$|\tau| \leq \frac{km}{m+k}(p+1-r).$$ In view of these considerations, we easily obtain
$$J_2 \leq C_3 ( J'_2 + p^{\frac{km}{k+m} }|u|_p + J''_2),$$
where 
$$J'_2 = \sum_{p<r<p+1} p^{\frac{km}{k+m}(p+1-r)}|u|_r,$$
$$J''_2 =\sum_{0 \leq r < p}p^{\frac{km}{k+m}(p+1-r)}|u|_r.$$
Now, applying Lemma \ref{AntiWick} to $J'_2$ with $$\varepsilon =(4C_3)^{-1} p^{-\frac{km}{k+m}(p+1-r)},$$ and using standard factorial inequalities we obtain
$$J'_2 \leq (4C_3)^{-1}|u|_{p+1} + C_4 p^{\frac{km}{k+m}}|u|_p + C_5^{p+1} (p+1)!^{\frac{km}{k+m}}|u|_0.$$
Similarly, writing $$J''_2 = p^{\frac{km}{k+m}(p+1)}|u|_0 + \sum_{q=0}^{p-1}\sum_{q < r < q+1}p^{\frac{km}{k+m}(p+1-r)}|u|_r$$ and applying Lemma \ref{AntiWick} to each term of the sum above with
$$\varepsilon = p^{-\frac{km}{k+m}(q+1-r)},$$
we get
\begin{eqnarray*}
J''_2 &\leq& C_6^{p+1} (p+1)!^{\frac{km}{k+m}}|u|_0 + C_7\sum_{q=0}^{p-1}\left[p^{\frac{km}{k+m}(p-q)}|u|_{q+1} + 
p^{\frac{km}{k+m}(p-q+1)}|u|_q \right] 
\\ &\leq& C_8^{p+1} (p+1)!^{\frac{km}{k+m}}|u|_0 + C_9 \sum_{q=1}^{p} p^{\frac{km}{k+m}(p-q+1)}|u|_q,
\end{eqnarray*}
from which we get
$$J_2 \leq \frac14 |u|_{p+1} + \tilde{C}^{p+1} (p+1)!^{\frac{km}{k+m}}|u|_0 + C' \sum_{q=1}^{p} p^{\frac{km}{k+m}(p-q+1)}|u|_q$$
for some positive constants $C', \tilde{C}$ independent of $p$.
From the estimates above, taking $\lambda$ sufficiently large and using the fact that $t \geq 1$, we obtain
$$Y_2 = \lambda^{-p-1}(p+1)!^{-\frac{kmt}{k+m}}J_2 \leq \frac14  \sum_{h=0}^{p+1} \sigma_h( \lambda, u).$$
Analogous estimates can be derived for $Y_3$ and yield \eqref{stimasigma}. We leave the details for the reader.
\end{proof}

Starting from \eqref{stimasigma} and arguing by induction on $p$ it is easy to prove the following result. We omit the proof for the sake of brevity.

\begin{lemma} \label{inductiveest}
For every $p \in \N, t \geq 1$ and $\lambda >0$ sufficiently large we have
$$\sigma_p(u, \lambda) \leq 2^{p} \sigma_0(u, \lambda) + \sum_{\ell=1}^p
2^{p-\ell} \binom{p}{\ell} (\ell !)^{-\frac{kmt}{k+m}}\sigma_0(P^\ell u, \lambda).$$ \end{lemma}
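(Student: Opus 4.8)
The plan is to prove the statement by induction on $p$, but with the inductive hypothesis formulated for \emph{every} $v \in \cS(\R^n)$ rather than for the fixed $u$; this is essential, because the recurrence \eqref{stimasigma} expresses $\sigma_{p+1}(u,\lambda)$ through $\sigma_p(Pu,\lambda)$, i.e.\ through the quantity at the previous level evaluated at $Pu$. I fix $\lambda$ large enough that \eqref{stimasigma} holds and abbreviate $\theta := \frac{kmt}{k+m} > 0$. The base case $p=0$ is the trivial identity $\sigma_0(v,\lambda) \le \sigma_0(v,\lambda)$, the sum on the right being empty.

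For the inductive step I apply \eqref{stimasigma} to get $\sigma_{p+1}(u,\lambda) \le (p+1)^{-\theta}\sigma_p(Pu,\lambda) + \sum_{h=0}^p \sigma_h(u,\lambda)$, and then insert the inductive bound into each summand: into $\sigma_p(Pu,\lambda)$ (applied to $v=Pu$, which produces terms in $\sigma_0(P^{\ell+1}u,\lambda)$) and into every $\sigma_h(u,\lambda)$ with $0\le h\le p$ (which produces terms in $\sigma_0(P^\ell u,\lambda)$, $0\le\ell\le h$). After the reindexing $j=\ell+1$ the first group becomes $(p+1)^{-\theta}\sum_{j=1}^{p+1}2^{p-j+1}\binom{p}{j-1}\bigl((j-1)!\bigr)^{-\theta}\sigma_0(P^j u,\lambda)$, so the whole estimate is a linear combination of the quantities $\sigma_0(P^\ell u,\lambda)$, $0\le\ell\le p+1$, and the task reduces to checking that each coefficient does not exceed the one prescribed by the statement at level $p+1$.

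The key elementary manipulation is that the factor in the first group rewrites as $(p+1)^{-\theta}\bigl((j-1)!\bigr)^{-\theta} = \bigl(j/(p+1)\bigr)^{\theta}(j!)^{-\theta} \le (j!)^{-\theta}$, since $1\le j\le p+1$ and $\theta>0$; this is exactly what converts the gain $(p+1)^{-\theta}$ of \eqref{stimasigma} into the correct factorial normalization $(\ell!)^{-\theta}$ at the next level, so that the hypothesis $t\ge1$ enters only through \eqref{stimasigma} and plays no role in the present bookkeeping. Collecting the $\sigma_0(u,\lambda)$ contributions of the second group gives $\sum_{h=0}^p 2^h = 2^{p+1}-1\le 2^{p+1}$, matching the required leading coefficient, while for $\ell=p+1$ only the first group contributes, with coefficient exactly $1=\binom{p+1}{p+1}$.

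The main obstacle, and the only genuine computation, is the middle range $1\le\ell\le p$, where one must verify
\[
2^{p+1-\ell}\binom{p}{\ell-1} + \sum_{h=\ell}^p 2^{h-\ell}\binom{h}{\ell} \;\le\; 2^{p+1-\ell}\binom{p+1}{\ell}.
\]
Using Pascal's identity $\binom{p+1}{\ell}=\binom{p}{\ell}+\binom{p}{\ell-1}$, this reduces to the clean inequality $\sum_{h=\ell}^p 2^{h}\binom{h}{\ell}\le 2^{p+1}\binom{p}{\ell}$, which I prove by a short secondary induction on $p\ge\ell$: the base $p=\ell$ reads $2^\ell\le 2^{\ell+1}$, and the step follows by adding $2^{p+1}\binom{p+1}{\ell}$ to both sides of the inductive inequality and invoking the monotonicity $\binom{p}{\ell}\le\binom{p+1}{\ell}$. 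This settles all three ranges of $\ell$, closes the induction, and establishes the lemma.
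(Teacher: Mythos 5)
Your proof is correct: the strong induction on $p$ with the hypothesis quantified over all $v\in\cS(\R^n)$ (so it can be applied to $Pu$), the absorption of $(p+1)^{-\theta}$ into $(j!)^{-\theta}$, and the combinatorial verification $\sum_{h=\ell}^p 2^{h}\binom{h}{\ell}\le 2^{p+1}\binom{p}{\ell}$ all check out. This is exactly the "induction on $p$ starting from \eqref{stimasigma}" that the paper announces and omits for brevity, so your argument supplies the intended proof rather than an alternative one.
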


\textit{Proof of Theorem \ref{t1}.} The fact that the Gelfand-Shilov regularity of $u$ implies \eqref{iteratesestimate} is easy to prove and we omit the details. In the opposite direction, by Proposition \ref{char} it is sufficient to prove that $u$ satisfies \eqref{charest} for every $r>0$. From the previous estimate, we have, for every $p \in \N$:
$$\sigma_p(u, \lambda) \leq C + \sum_{\ell=1}^p
2^{p-\ell} \binom{p}{\ell} C^{\ell+1} \leq C(2+C)^{p+1}.$$
Therefore
$$|u|_p  \leq C^{p+1}p!^{\frac{kmt}{k+m}}$$
for a new constant $C>0$, which gives \eqref{charest} in the case $r \in \N$. If $ r>0$ is not integer, then $p<r<p+1$ for some $ p \in \N$ and we can apply Lemma \ref{AntiWick} which yields
\begin{eqnarray*}
|u|_r &\leq& \varepsilon |u|_{p+1}+C \varepsilon^{-\frac{r-p}{p+1-r}}|u|_p + C^p (p!)^{\frac{km}{k+m}}|u|_0 \\
&\leq& \varepsilon C_1^{p+1}(p+1)!^{\frac{kmt}{k+m}} + C_1^p \varepsilon^{-\frac{r-p}{p+1-r}} (p+1)!^{\frac{kmt}{k+m}}
 + C_1^p (p+1)!^{\frac{kmt}{k+m}} \leq C_2^{r+1} r^{\frac{kmrt}{k+m}}.
\end{eqnarray*}
Then, by Proposition \ref{char} we conclude that $u \in \mathcal{S}^{\frac{kt}{k+m}}_{\frac{mt}{k+m}}(\R^n)$. Similarly we argue for 
$u \in \Sigma^{\frac{kt}{k+m}}_{\frac{mt}{k+m}}(\R^n)$. \qed
\\

\textit{Proof of Theorem \ref{t2}}. The equivalence between $ii)$ and $iii)$ is obvious. Moreover $iii)$ is equivalent to $iv)$ in view of \eqref{aseigen}. The arguments are similar for $\mathcal{S}^{\frac{kt}{k+m}}_{\frac{mt}{k+m}}(\R^n)$ and $\Sigma^{\frac{kt}{k+m}}_{\frac{mt}{k+m}}(\R^n)$ classes. To conclude the proof we will show the equivalence between $i)$ and $iv)$. 
We first observe that 
$$ \|P^Mu\|^2_{L^2}=\|\sum\limits_{j=1}^\infty u_jP^M
\varphi_j \|_{L^2}^2=\sum\limits_{j=1}^\infty \lambda_j^{2M}|u_j|^2,
$$
in view of  Parseval identity. By \eqref{aseigen} it
follows that
\begin{eqnarray}\label{eq3.2}
C_1\|P^Mu\|^2_{L^2}\leq \sum\limits_{j=1}^\infty j^{2Mkm/(n(k+m))}|u_j|^2
\leq C_2\|P^Mu\|^2_{L^2}
\end{eqnarray}
for suitable positive constants $C_1,C_2$. Now if $iv)$ holds, then we have
$$ |u_j|^2\leq e^{-\epsilon  j^{1/(nt)}}$$
for some new constant $\epsilon>0$. Then from the first estimate
in (\ref{eq3.2}) we have for some $C>0$
\begin{eqnarray}
\|P^Mu\|^2_{L^2} &\leq & C\sum\limits_{j=1}^\infty
j^{2Mkm/(n(m+k))}e^{-\epsilon j^{1/(nt)}}
 \\ &\leq&
\tilde{C}\sup_{j \in \N}j^{2Mmk/(n(m+k))}e^{-\epsilon j^{1/(nt)}}
\label{eq3.3} \end{eqnarray}
with
$$\tilde{C}=C\sum\limits_{j=1}^\infty e^{-\epsilon j^{1/(nt)}}.$$
Moreover, for any fixed $\omega >0$ we have
$$e^{\omega j^{1/(nt)}}=\sum\limits_{M=0}^\infty \frac{\omega^Mj^{M/(nt)}}{M!}.$$
This implies that for every $M \in {\mathbb N}$:
\begin{eqnarray}\label{eq3.4}
j^{M/(nt)}e^{-\omega j^{1/(nt)}}\leq \omega^{-M}M!
\end{eqnarray}
Taking the $2kmt/(k+m)$-th power of both sides of (\ref{eq3.4}) and
applying in the last estimate in (\ref{eq3.3}) with $$\omega =2\epsilon kmt/(k+m),$$ we obtain 
\begin{eqnarray*}
\|P^Mu\|^2_{L^2}\leq \tilde{C}\omega^{-\frac{2Mkmt}{k+m}}(M!)^{\frac{2mkt}{m+k}},
\end{eqnarray*}
which gives $i)$ in view of Theorem \ref{t1}. \\
$i) \Rightarrow ii)$ Viceversa assume that $u \in \mathcal{S}^{\frac{kt}{k+m}}_{\frac{mt}{k+m}}(\R^n)$.
In view of $iv)$ it is sufficient to show that
\begin{equation}\label{bord}\sup_{j \in \N} |u_j|^2 e^{\epsilon j^{\frac1{nt}}}< +\infty.
\end{equation}
Theorem \ref{t1} and the second inequality in \eqref{eq3.2} imply that
$$\frac{j^{\frac{2Mkm}{n(k+m)}}}{C^M (M!)^{\frac{2kmt}{k+m}}}|u_j|^2 \leq C$$
for every $j, M \in \N$ and for some $C$ independent of $j$ and $M$.
Taking the supremum of the left-hand side over $M$ we get \eqref{bord} with $\epsilon =\frac{2kmt}{k+m}C^{-\frac{k+m}{2kmt}}.$ This concludes the proof. \qed

\begin{section}{Generalizations}
We list some possible generalizations of the preceding results. First, one can replace the hypothesis of positivity for the operator $P$ by assuming that $P$ is normal, i.e. $P^*P = PP^*$. This guarantees the existence of an orthonormal basis of eigenfunctions $\varphi_j, j \in \N$, with eigenvalues $\lambda_j, \lim\limits_{j \to \infty}|\lambda_j| =+\infty$, see \cite{shubin}, and we may then proceed as before, cf. \cite{S2}. \\ Another possible generalization consists in replacing $L^2$ norms with $L^p$ norms, $1<p<\infty$. Let us observe that the basic estimate \eqref{ellipticestimate} is valid also for $L^p$ norms, see \cite{GM, M}, and it seems easy to extend Theorem \ref{t1} in this direction.  \\ A much more challenging problem is an analogous characterization of the classes $\mathcal{S}^\mu_\nu( \R^n)$ when $\kappa = \mu/\nu = k/m $ is irrational. First difficulty, in this case, is given by an appropriate choice of the operator $P$. In fact, the natural candidates 
$$P= (-\Delta)^{m/2} +(1+|x|^2)^{k/2}, \qquad m \in 2\N, k >0, k \notin 2\N$$
can be easily treated in the setting of temperate distributions but results of Gelfand-Shilov regularity, extending those in \cite{CGR2}, are missing for them.

\end{section}
 \vskip0.3cm

\textbf{Note.} With great sorrow, Marco Cappiello, Stevan Pilipovic and Luigi Rodino inform that their friend Todor Gramchev passed away on October 17, 2015. He inspired and collaborated to the initial version of the present paper and appears here as co-author.

\bibliographystyle{amsplain}

\begin{thebibliography}{10}

\bibitem{AC} A.Ascanelli, M.Cappiello, \textit{H\"older continuity in time for SG hyperbolic systems}, J. Differential Equations \textbf{244} (2008), 2091--2121.

\bibitem{A}
A. Avantaggiati, \textit{$S$-spaces by means of the behaviour of Hermite-Fourier coefficients}, Boll. Un. Mat. Ital. \textbf{6} (1985), 487--495.

\bibitem{BG} 
H.A. Biagioni, T. Gramchev, 
\textit{Fractional derivative estimates in Gevrey spaces, global regularity
and decay for solutions to semilinear equations in $\R^n$}, J. Differential Equations \textbf{194} (2003), 140--165.

\bibitem{BBR}
P. Boggiatto, E. Buzano, L. Rodino, \textit{Global
hypoellipticity and spectral theory.}  Math. Res. \textbf{92},
Akademie Verlag, Berlin, 1996.

\bibitem{CR} D. Calvo, L. Rodino, {\it Iterates of operators and Gelfand-Shilov
functions}, Int. Transf. Spec. Funct. \textbf{22} (2011), 269--276.

\bibitem{CGR1}
M. Cappiello, T. Gramchev, L. Rodino, \textit{Super-exponential decay
and holomorphic extensions for semilinear equations with polynomial
coefficients.} J. Funct. Anal. \textbf{237} (2006), 634--654.

\bibitem{CGR2}
M. Cappiello, T. Gramchev, L. Rodino, \textit{Entire extensions and exponential decay for semilinear elliptic equations}, J. Anal. Math. \textbf{111} (2010), 339--367.

\bibitem{CGR3}
M. Cappiello, T. Gramchev, L. Rodino, \textit{Sub-exponential decay and uniform holomorphic extensions for semilinear pseudodifferential equations}, Comm. Partial Differential Equations \textbf{35} (2010), n. 5, 846-877.

\bibitem{CaRo} M. Cappiello, L. Rodino, \textit{SG-pseudo-differential operators and Gelfand-Shilov spaces}, Rocky Mountain J. Math.\textbf{36} (2006) n. 4, 1117--1148.

\bibitem{CT} M. Cappiello, J. Toft, \textit{Pseudo-differential operators in a Gelfand-Shilov setting}, Math. Nachr. (2016). To appear.

\bibitem{CST} Y. Chen, M. Signahl, J. Toft,
\textit{Factorizations and  singular value estimates of operators with  Gelfand--Shilov and  Pilipovi\' c  kernels},
arXiv:1511.06257 (2016).

\bibitem{CCK}
J. Chung, S. Y. Chung, D. Kim,  \textit{Characterization of the Gelfand-Shilov spaces via Fourier transforms}, Proc. Am. Math. Soc. \textbf{124} (1996), 2101--2108.

\bibitem{CNR}
E. Cordero, F. Nicola, L. Rodino, \textit{ Wave packet analysis of Schr\"odinger equations in analytic function spaces}, Adv. Math. \textit{278} (2015), 182--209.
 
\bibitem{CPRT}
E. Cordero, S. Pilipovi\'c, L. Rodino, N. Teofanov, \textit{Localization operators and exponential weights for modulation spaces}, Mediterranean J. Math.  \textbf{2} (2005), 381--394.

\bibitem{DaRu1}
A. Dasgupta, M. Ruzhansky, \textit{Eigenfunction expansions of ultradifferentiable functions and ultradistributions}, Trans. Amer. Math. Soc., to appear.  Available at https://arxiv.org/abs/1410.2637.



\bibitem{GM}
G. Garello, A. Morando, \textit{$L^p$-bounded pseudo-differential operators and regularity for multi-quasi-elliptic equations}, Integral Equations Operator Theory \textbf{51} (2005), 501--517.

\bibitem{GeShi}
I.M. Gelfand,  G.E. Shilov, \textit{Generalized functions II.}
Academic Press, New York, 1968.

\bibitem{gpr2} T. Gramchev, S. Pilipovi\' c, L. Rodino,
{\it Global Regularity and Stability in S-Spaces for Classes of Degenerate Shubin Operators}.
Pseudo-Differential Operators: Complex Analysis and Partial Differential Equations
Operator Theory: Advances and Applications \textbf{205} (2010), 81-90.

\bibitem{gpr1} T. Gramchev, S. Pilipovi\' c, L. Rodino,
{\it Eigenfunction expansions in $\R^n$}, Proc. Amer. Math. Soc. \textbf{139} (2011), 4361--4368.

\bibitem{GZ2}
K. Gr\"ochenig, G. Zimmermann, \textit{Spaces of test functions via the STFT}, J. Funct. Spaces Appl. \textbf{2} (2005), 1671--1716.

\bibitem{HE}
B. Helffer, \textit{Th\'{e}orie spectrale pour des op\'{e}rateurs
globalement elliptiques.} Ast\'{e}risque \textbf{112},
Soci\'{e}t\'{e} Math\'{e}matique de France, Paris, 1984.

\bibitem{K} H. Komatsu, {\it A proof of Kotake and Narashiman's
Theorem}. Proc. Japan Acad. \textbf{38} (1962), 615-618.

\bibitem{KN} T. Kotake, M.S. Narasimhan, {\it Regularity theorems for fractional powers of a linear elliptic
operator}. Bull. Soc. Math. France, \textbf{90} (1962), 449-471.

\bibitem{lang}
M. Langenbruch, \textit{Hermite functions and weighted spaces of
generalized functions.}
  Manuscripta Math.  \textbf{119}  (2006), 269--285.

\bibitem{Mi}
B.S. Mitjagin, \textit{Nuclearity and other properties of spaces of type $S$}, Amer. Math. Soc. Transl., Ser. 2, \textbf{93} (1970), 45--59.

\bibitem{M}
A. Morando, \textit{$L^p$-regularity for a class of pseudo-differential operators in $\R^n$}, J. Partial Differential Equations \textbf{18} (2005), 241--262.

\bibitem{NR} F. Nicola, L. Rodino, {\it Global pseudo-differential calculus on Euclidean
spaces}, Birkhauser, Basel, 2010.

\bibitem{pil86}
S. Pilipovi\'c, \textit{Generalization of Zemanian spaces of generalized functions
which have orthonormal series expansions}, SIAM J. Math. Anal. \textbf{17} (1986),
477-484.

\bibitem{pil88}
S. Pilipovi\'c, \textit{Tempered ultradistributions.} Boll. Unione
Mat. Ital. VII. Ser. B  {\bf 2} (1988), 235--251.

\bibitem{PT}
S. Pilipovi\'c, N. Teofanov, \textit{Pseudodifferential operators on ultramodulation spaces}, J. Funct. Anal. \textbf{208} (2004), 194--228.

\bibitem{reedsimon} M. Reed, B. Simon,
\textit{Methods of modern mathematical physics} Vol 1. Academic Press, San Diego Ca., 1975.

\bibitem{S1} R.T. Seeley, {\it Integro-differential operators on vector boundes}, Trans. Am. Math.
Soc. \textbf{117} (1965), 167-204.

\bibitem{S2} R.T. Seeley, {\it Eigenfunction expansions of analytic functions}, Proc. Am.
Math. Soc. \textbf{21} (1969), 734--738.

\bibitem{shubin} M. Shubin, \textit{Pseudodifferential operators and 
spectral theory.} Springer Series in Soviet Mathematics,  Springer
Verlag, Berlin, 1987.
 
 \bibitem{T2} J.  Toft,  \textit{Multiplication properties in Gelfand-Shilov pseudo-differential calculus.} In: Molahajlo, S.,
Pilipovi\' c, S., Toft, J., Wong, M.W. (eds.) Pseudo-Differential Operators, Generalized Functions and
Asymptotics, Operator Theory: Advances and Applications,  Birkh\" user, Basel,
 \textbf{231} (2013),  117-172.

\bibitem{T1} J. Toft, \textit{Images of function and distribution spaces under the Bargmann transform},
J. Pseudo-Differ. Oper. Appl. DOI 10.1007/s11868-016-0165-9 (2016).

\bibitem{vv} J. Vindas, Dj. Vuckovic, \textit{Eigenfunction expansions of ultradifferentiable functions and ultradistributions in $\mathbb R^n$} arXiv:1512.01684 (2016).

\bibitem{Wpar} M.W. Wong,  \textit{The heat equation for the Hermite operator
on the Heisenberg group.} Hokkaido Math. J. \textbf{34}  (2005),
393--404.

\end{thebibliography}

\end{document}